\newcommand{\blue}[1]{{#1}}
\newcommand{\specialcell}[2][c]{\begin{tabular}[#1]{@{}c@{}}#2\end{tabular}}
\tikzset{every node/.style={auto}}
\tikzset{every state/.style={rectangle, minimum size=0pt, draw=none, font=\normalsize}}
\newcommand{\R}{\mathbb{R}}
\newcommand{\Z}{\mathbb{Z}}
\newcommand{\RR}{\mathcal{R}}
\newcommand{\T}{\mathcal{T}}
\providecommand{\abs}[1]{\lvert#1\rvert}
\newcommand{\dd}[2]{\frac{\text{d} #1}{\text{d} #2}}
\DeclareMathOperator{\supp}{supp}
\newcommand{\lras}{\rightleftarrows}
\newcommand{\lrl}{{\; -\hspace{-1.25ex}- \;}}
\newtheorem{thm}{Theorem}
\newtheorem{pro}[thm]{Proposition}
\newtheorem{rem}[thm]{Remark}
\newtheorem*{remark*}{Remark}
\newtheorem{exa}[thm]{Example}
\def\blfootnote{\xdef\@thefnmark{}\@footnotetext}
\begin{document}

\title{\Large Detailed balance = complex balance + cycle balance. \\
A graph-theoretic proof for reaction networks \\ 
and Markov chains}

\author{Stefan M\"uller and Badal Joshi}
\blfootnote{
\scriptsize

\vspace{-2ex}

\smallskip
\noindent
{\bf S.~M\"uller} (\href{mailto:st.mueller@univie.ac.at}{st.mueller@univie.ac.at}),
Faculty of Mathematics, University of Vienna

\smallskip
\noindent
{\bf B.~Joshi}, \href{mailto:bjoshi@csusm.edu}{bjoshi@csusm.edu}),
Department of Mathematics, California State University San Marcos
}

\maketitle

\begin{abstract}
We further clarify the relation between detailed-balanced and com\-plex-balanced equilibria
of reversible chemical reaction networks.
Our results hold for arbitrary kinetics and also for boundary equilibria.

Detailed balance, complex balance, ``formal balance'', and the new notion of ``cycle balance''
are all defined in terms of the underlying graph.
This fact allows elementary graph-theoretic (non-algebraic) proofs of 
a previous result (detailed balance = complex balance + formal balance), 
our main result (detailed balance = complex balance + cycle balance),
and a corresponding result in the setting of continuous-time Markov chains. \\[2ex]
{\bf Keywords:} chemical reaction network, arbitrary kinetics, graph theory, induced graph, mixed graph
\end{abstract}

\section{Introduction}

Detailed balance and complex balance are important concepts
in chemical reaction network theory (CRNT).
Both principles have been proposed already in the 1870s and 1880s by Ludwig Boltzmann 
in the kinetic theory of gases (where complex balance is called semi-detailed balance)~\cite{Boltzmann1872,Boltzmann1887}.
Around 1900, Rudolf Wegscheider introduced the principle of detailed balance in the field of chemical kinetics
(and obtained the necessary conditions on the rate constants named after him)~\cite{Wegscheider1901}.
Only in the 1970s, Horn and Jackson developed the concept of complex balance
\blue{(as a generalization of detailed balance)}
in modern CRNT~\cite{HornJackson1972}.

\blue{Complex-balanced (CB) mass-action systems display remarkably robust dynamics.
If one positive equilibrium is CB, then so is every other equilibrium, 
which justifies calling the entire system CB.
Moreover, there is exactly one positive equilibrium in every stoichiometric class (invariant set),
and this equilibrium is asymptotically stable (implied by a strict Lyapunov function)~\cite{HornJackson1972}. 
In various important cases,
it has been shown that positive CB equilibria are globally stable~\cite{anderson2011proof,craciun2013persistence}, 
a property that is conjectured to hold for all CB systems~\cite{horn1974dynamics,craciun2015toric}. 
Finally, mass-action systems that are not CB may be dynamically equivalent to CB systems
and have all their strong properties~\cite{craciun2020efficient}.}


For mass-action kinetics,
complex balance has been characterized by Horn \cite{Horn1972},
and explicit conditions on the ``tree constants'' of the underlying graph have been provided by Craciun et al~\cite{CraciunDickensteinShiuSturmfels2009};
see also~\cite{Johnston2014,MuellerRegensburger2014}.
Detailed balance has been characterized by Feinberg and Schuster and Schuster~\cite{Feinberg1989,SchusterSchuster1989}.
Feinberg obtains two classes of conditions on the equilibrium constants:
$\gamma = r - m + \ell$ ``circuit conditions'' and $\delta = m - \ell - s$ ``spanning forest conditions''.
Thereby, $\delta$~is the deficiency of the network~\cite{Feinberg1972}, 
and $\gamma$~is the cycle rank (cyclomatic number) of the underlying (undirected) graph~\cite{Berge1962}.
That is, $r$ is the number of reversible reactions (pairs of edges), $m$ is the number of complexes (vertices),
$\ell$ is the number of linkage classes (connected components), and $s$ is the rank of the stoichiometric matrix.
Schuster and Schuster consider ``generalized mass-action kinetics'' 
in the sense that the net reaction rate contains a mass-action factor (as for enzyme kinetics). 
They provide ``generalized Wegscheider's conditions'' on the equilibrium constants;
in fact, they obtain $r-s \; (=\gamma+\delta)$ independent conditions. 
Finally, Dickenstein and Perez-Millan have shown that,
given the circuit conditions (``formal balance''),
the conditions on the tree constants (complex balance) agree with the spanning forest conditions on the equilibrium constants (detailed balance).
That is, detailed balance is equivalent to complex balance plus formal balance,
and the result can be extended from mass-action to ``general kinetics''~\cite{DickensteinPerezMillan2011}.
For mass-action, an alternative proof has been given in~\cite{vanderSchaft2015}.
\blue{For stochastic mass-action,
the stationary distribution of the resulting continuous-time Markov chain is a product-form Poisson distribution 
if and only if the underlying deterministic system is CB~\cite{anderson2010product,cappelletti2016product}. 
If a CB system is also detailed-balanced, then the stationary solution is detailed-balanced (reversible)~\cite{Joshi2015}. 
For other aspects of detailed and complex balance, see e.g.~\cite{MuellerHofbauer2015,FeliuCappellettiWiuf2018}.}

In this work,
we provide new conditions on a complex-balanced equilibrium of a reversible chemical reaction network 
to be detailed-balanced. 
As just stated,
a characterization has already been obtained in~\cite{DickensteinPerezMillan2011}.
On the one hand, we give an elementary graph-theoretic (non-algebraic) proof of the previous result
(without using the conditions on the tree/equilibrium constants for complex/detailed balance).
On the other hand, we show that complex balance plus a condition significantly weaker than formal balance, 
namely the absence of directed cycles in an induced (mixed) graph, is equivalent to detailed balance.
The result immediately holds for arbitrary kinetics 
and also for boundary equilibria.
Since our proof is based on the induced graph,
it can be applied in other settings with an underlying graph structure.  
We illustrate this via continuous-time Markov chains.


The work is organized as follows. 
First, we present the elementary argument (balance in mixed graphs) that is common to all types of networks.
Then, we apply it to different types of networks (balance in reaction networks and balance in Markov chains).

\section{Balance in mixed graphs} \label{sec:mix}

The object of study in this section is a simple mixed graph.
\blue{
Recall that a {\em mixed} graph contains undirected and directed edges, in general, 
and that a {\em simple} mixed graph does not contain multiple edges (connecting two vertices) or loops (connecting a vertex to itself).}


Let $G=(V,U,D)$ be a simple mixed graph
(with vertices~$V$, undirected edges~$U$, and directed edges~$D$).
\blue{Explicitly, if two vertices $v, v'\in V$ are connected by an edge,
then $v \neq v'$
and exactly one of the following holds: $(v \lrl v') \in U$, $(v \to v') \in D$, or $(v \leftarrow v') \in D$.}

A {\em path} is a \blue{(finite or infinite)} sequence of edges which connect \blue{a sequence of} distinct vertices.
For finite paths, the first and last vertex may be identical, in which case the path is a {\em cycle}.
A path is called {\em directed} if it contains only directed edges
and all edges have the same direction \blue{(along the path)}.
\blue{In other words, 
a path connecting the vertices $v,v',v'',\ldots$ is directed if $v \to v' \to v'' \to \ldots$ or $v \leftarrow v' \leftarrow v'' \leftarrow \ldots $.}
A path is called {\em weakly directed} if it contains a directed edge
and all directed edges have the same direction.

An edge is called {\em balanced} if it is undirected.
A vertex is called {\em balanced}
if the set of incident edges contains either only undirected edges or a pair of oppositely directed edges \blue{(with respect to the vertex)}.
\blue{In other words, 
a vertex $v$ is balanced if the existence of $v'$ with $v' \to v$ implies the existence of $v''$ with $v \to v''$ and vice versa. Note that $v' \neq v''$ by the simplicity of the graph.}

$G$ is called {\em edge-balanced/vertex-balanced} if every edge/vertex is balanced. 

\subsection{Finite graphs}

An edge-balanced graph has only undirected edges and therefore is vertex-balanced and contains no directed cycle. In the following, we show the converse. 

\begin{pro} \label{pro:main}
Let $G=(V,U,D)$ be a finite, simple mixed graph.
If $G$ is vertex-balanced, but not edge-balanced,
then it contains a directed cycle.  
\end{pro}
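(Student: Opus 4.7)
The plan is to construct a directed cycle by iteratively extending a directed walk, exploiting the vertex-balance condition at each step. The starting point is the hypothesis that $G$ is not edge-balanced, which by definition yields at least one directed edge $v_0 \to v_1$ in $D$.

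The main construction proceeds inductively: having reached vertex $v_i$ via a directed edge $v_{i-1}\to v_i$, note that $v_i$ has an incoming directed edge, so the vertex-balance condition at $v_i$ (in the "vice versa" form emphasized in the excerpt) forces the existence of some outgoing directed edge $v_i \to v_{i+1}$. Choosing any such edge defines $v_{i+1}$, and the construction continues. This yields an infinite sequence $v_0 \to v_1 \to v_2 \to \cdots$ of directed edges.

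Since $V$ is finite, the pigeonhole principle guarantees a first repetition: let $j$ be the smallest index such that $v_j = v_i$ for some $i<j$. Then the vertices $v_i, v_{i+1}, \ldots, v_{j-1}$ are pairwise distinct, and the subwalk
\[
v_i \to v_{i+1} \to \cdots \to v_{j-1} \to v_j = v_i
\]
consists entirely of directed edges all traversed in the forward direction, hence is a directed cycle in $G$.

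I expect no serious obstacle. The one point requiring a small amount of care is the choice at $v_i$: vertex-balance only asserts the existence of some outgoing directed edge, not uniqueness, so the construction depends on arbitrary choices, but this is harmless since we only need existence of a directed cycle. A second minor point is that one should remark explicitly (citing the definition given in the excerpt) that a vertex incident to \emph{any} directed edge is automatically in the "directed" case of the balance condition, which is what licenses the inductive step. The simplicity of the graph plays no role in the argument beyond the fact, already noted in the excerpt, that the two oppositely directed edges at a balanced vertex lead to distinct neighbors.
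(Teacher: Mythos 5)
Your proof is correct and follows essentially the same route as the paper's: start from a directed edge (which exists since $G$ is not edge-balanced), repeatedly apply vertex-balance to extend a directed walk, and use finiteness to extract a directed cycle. You merely spell out the pigeonhole/first-repetition step that the paper leaves implicit.
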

\begin{proof}
Assume that $G$ is vertex-balanced and that there exists a directed edge $v \to v'$.
By vertex-balance for $v'$, there exists a corresponding directed edge $v' \to v''$.
Repeating this argument, we construct a directed path $v \to v' \to v'' \to \ldots$ which,
by the finiteness of the graph,
eventually yields a directed cycle.
\qed
\end{proof}
The main result used in the following section is the contrapositive of Proposition~\ref{pro:main}, which we state as a theorem. 
\begin{thm} \label{thm:main}
Let $G=(V,U,D)$ be a finite, simple mixed graph.
If $G$ is vertex-balanced and contains no directed cycle,
then it is edge-balanced.
\end{thm}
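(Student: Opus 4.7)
Since Proposition~\ref{pro:main} has just been established, the cleanest strategy is to observe that this theorem is precisely its contrapositive: negating ``edge-balanced'' gives the hypothesis of Proposition~\ref{pro:main}, while negating ``contains a directed cycle'' gives the desired conclusion here. Under this reading, no new work is required, so there is essentially no obstacle to overcome. For completeness, I would nonetheless spell out the combinatorial mechanism directly, without appealing to the contrapositive, since the direct version will be reused as intuition in the later sections on reaction networks and Markov chains.

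The direct plan is a proof by contradiction. Assume $G$ is vertex-balanced and contains no directed cycle, yet is not edge-balanced. Since an edge is balanced exactly when it is undirected, there must exist at least one directed edge $v_0 \to v_1$. Applying vertex-balance at $v_1$, the incoming directed edge from $v_0$ forces the existence of an outgoing directed edge $v_1 \to v_2$. Iterating the same step at $v_2, v_3, \ldots$ produces an infinite directed walk $v_0 \to v_1 \to v_2 \to \cdots$. Because $V$ is finite, the pigeonhole principle yields indices $i < j$ with $v_i = v_j$, and then $v_i \to v_{i+1} \to \cdots \to v_j$ is a directed cycle, contradicting the hypothesis.

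The only point requiring care, and thus the mildest candidate for a ``main obstacle,'' is verifying that the walk actually advances at each step, i.e.\ that the outgoing edge $v_k \to v_{k+1}$ produced by vertex-balance is not merely the reversal of the incoming edge $v_{k-1} \to v_k$. This is exactly where the simplicity hypothesis is used: the definition of simple mixed graph excludes the simultaneous presence of $v_{k-1} \to v_k$ and $v_k \to v_{k-1}$, so the outgoing edge guaranteed by vertex-balance at $v_k$ must point to some $v_{k+1} \neq v_{k-1}$. (This is also the content of the parenthetical ``$v' \neq v''$ by simplicity'' in the definition of vertex-balance.) Once this bookkeeping is in place, the iteration is well-defined and the finiteness argument closes the proof immediately.
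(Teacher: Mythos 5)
Your proposal is correct and matches the paper exactly: the paper proves this theorem simply by noting it is the contrapositive of Proposition~\ref{pro:main}, and your supplementary direct argument (growing a directed walk via vertex-balance, using simplicity to rule out backtracking, and closing it into a cycle by finiteness) is precisely the paper's proof of that proposition recast as a contradiction argument.
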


\subsection{Infinite graphs}

\blue{A directed path is called {\em bi-infinite}
if it connects a bi-infinite sequence of vertices.}
Bi-infinite directed paths can be viewed as a ``\blue{directed} cycles of infinite length''.

\begin{pro}
Let $G=(V,U,D)$ be a simple mixed graph.
If $G$ is vertex-balanced, but not edge-balanced,
then it contains a directed cycle or a bi-infinite directed path.  
\end{pro}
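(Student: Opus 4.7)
The strategy is to adapt the argument of Proposition~\ref{pro:main}: extend a directed walk in both directions and classify what can go wrong when the graph may be infinite.

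Since $G$ is not edge-balanced, fix any directed edge $v_0 \to v_1$. Applying vertex-balance at $v_1$ yields a vertex $v_2$ with $v_1 \to v_2$, and simplicity forces $v_2 \neq v_0$. Iterating gives a forward directed walk $v_0 \to v_1 \to v_2 \to \ldots$ that never terminates, because vertex-balance always provides a next outgoing edge. Symmetrically, applying vertex-balance at $v_0$ to the outgoing edge $v_0 \to v_1$ yields a vertex $v_{-1}$ with $v_{-1} \to v_0$ and $v_{-1} \neq v_1$, and iterating backward produces an infinite walk $\ldots \to v_{-2} \to v_{-1} \to v_0$.

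I would then split into cases according to whether the concatenated sequence $(v_i)_{i \in \Z}$ has any repetition. If some vertex is revisited within the forward walk, say $v_i = v_j$ with $0 \leq i < j$, then $v_i \to v_{i+1} \to \cdots \to v_j$ is a closed directed walk containing a directed cycle; likewise for the backward walk. If some forward vertex equals some backward vertex, say $v_k = v_{-j}$ with $k, j \geq 1$, then $v_{-j} \to \cdots \to v_0 \to \cdots \to v_k$ is a closed directed walk of length $k + j \geq 2$, again containing a directed cycle. If no such collision occurs, the $v_i$ are pairwise distinct, so the concatenated sequence is by definition a bi-infinite directed path.

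The step needing most care, compared with the finite setting, is the cross-collision case: one must check that an overlap between the two one-sided walks genuinely yields a directed cycle rather than a degenerate closed walk. This follows because every edge of both walks is oriented from index $i$ to index $i+1$, so any closed subwalk of the concatenation is automatically directed, and a shortest closed subwalk is a simple directed cycle.
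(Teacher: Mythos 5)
Your argument is correct and is essentially the paper's intended proof: the paper simply declares this case "analogous to Proposition~\ref{pro:main}", i.e.\ one grows a directed walk using vertex-balance, and you have spelled out the necessary bidirectional extension and the case analysis (internal repetition, cross-collision, or no repetition) explicitly and accurately. The points you flag — simplicity ruling out immediate backtracking and length-two closed walks, and extracting a simple directed cycle from any closed directed subwalk — are exactly the details the paper leaves implicit.
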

\begin{proof}
Analogous to the proof of Proposition~\ref{pro:main}.
\end{proof}

Again, as a main result, we state its contrapositive.

\begin{thm} \label{thm:main_inf}
Let $G=(V,U,D)$ be a simple mixed graph.
If $G$ is vertex-balanced and contains no directed cycle or bi-infinite directed path,
then it is edge-balanced.
\end{thm}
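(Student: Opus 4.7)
The plan is to prove the contrapositive (as the paper does in the finite case via Proposition~\ref{pro:main}): assuming $G$ is vertex-balanced but not edge-balanced, I will exhibit either a directed cycle or a bi-infinite directed path. I would mimic the forward-extension argument of Proposition~\ref{pro:main}, then perform the analogous backward extension, and finally replace the finite pigeonhole step with a case split suited to the possibly infinite setting.

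Concretely, since $G$ is not edge-balanced, fix a directed edge $v_0 \to v_1$. Vertex-balance at $v_1$, combined with the simplicity clause recorded just after the definition, produces a vertex $v_2 \neq v_0$ with $v_1 \to v_2$; iterating yields a forward sequence $v_0 \to v_1 \to v_2 \to \cdots$. Dually, vertex-balance at $v_0$ (applied to the outgoing edge $v_0 \to v_1$) produces an incoming directed edge $v_{-1} \to v_0$ with $v_{-1} \neq v_1$, and iterating gives a backward sequence $\cdots \to v_{-2} \to v_{-1} \to v_0$. Together these provide a family of directed edges $v_i \to v_{i+1}$ indexed by an interval $I \subseteq \mathbb{Z}$ containing $\{0,1\}$.

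The final step is a case split on whether any $v_i$ is revisited. If $v_i = v_j$ for some $i < j$ with $j-i$ minimal, then $v_i \to v_{i+1} \to \cdots \to v_j = v_i$ passes through distinct intermediate vertices and is a directed cycle. If instead all the $v_i$ are distinct, vertex-balance always supplies the next edge at either endpoint, so neither extension can terminate, and $I = \mathbb{Z}$ gives a bi-infinite directed path. The main obstacle (absent from the finite case) is ruling out a merely one-sided infinite path: one must observe that vertex-balance is symmetric, so the very same criterion that propagates the path forward at $v_i$ for $i \geq 0$ propagates it backward at $v_i$ for $i \leq 0$, ensuring the two extensions together genuinely yield a $\mathbb{Z}$-indexed sequence. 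This establishes the contrapositive and hence Theorem~\ref{thm:main_inf}.
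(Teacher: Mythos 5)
Your proposal is correct and follows essentially the same route as the paper, which proves the contrapositive (its Proposition~3) by the same forward-extension argument as in the finite case; you merely spell out the details the paper leaves implicit under ``analogous'', namely the backward extension via the symmetry of vertex-balance and the case split between a revisited vertex (directed cycle) and an injective $\mathbb{Z}$-indexed sequence (bi-infinite directed path).
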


As a consequence, if $G$ is vertex-balanced and contains no directed cycle, then it cannot have a finite number of directed edges.

\section{Balance in reaction networks}

\blue{In the following,
we denote the positive real numbers by $\R_{>}$ and the nonnegative real numbers by $\R_{\ge}$. 
For a vector $x \in \R^n$, we denote its support by $\supp(x) = \{ i \mid x_i \neq 0 \}$.
For $x,y \in \R^n_\ge$, we define $x^y = \prod_{i=1}^n (x_i)^{y_i} \in \R_\ge$.

A {\em chemical reaction network} $(G,y)$ is given by} a finite, simple directed graph $G=(V,\RR)$
\blue{and a map $y \colon V \to \R^n_\ge$.}
To each vertex $i \in V$, a vector {\em (complex)} $\blue{y(i)} \in \R^n_\ge$ is assigned.
\blue{Complexes represent formal sums of $n$ chemical species
which are the left- and right-hand sides of chemical reactions.

As an example, consider the ``network'' consisting of the single reaction $\sf A + B \to C$,
involving the three species $\sf A, B, C$.
The underlying graph has two vertices, say 1 and 2, and one edge, $1 \to 2$,
that is, $G=(\{1,2\},\{1\to2\})$.
The left-hand side of the reaction is a formal sum of species $\sf A$ and $\sf B$,
and the right-hand side equals species $\sf C$,
that is,
they are represented by the complexes $y(1)=(1,1,0)^T$ and $y(2)=(0,0,1)^T$, respectively.

A {\em kinetic system} $(G,y,r)$ is given by a chemical reaction network $(G,y)$, where $G=(V,\RR)$,
and a map $r \colon \RR \to (\R^n_\ge \to \R_\ge)$.}
To each edge $(i \to j) \in \RR$, a rate function {\em (kinetics)} $r_{i \to j} \colon \R^n_\ge \to \R_\ge$ is assigned.

The resulting dynamical system for the concentrations $x \in \R^n_{\ge}$ (of $n$ chemical species) is defined as
\begin{equation} \label{dynsys}
\dd{x}{t} = \sum_{(i \to j) \in \RR} \big( y(j)-y(i) \big) \, r_{i \to j} (x) .
\end{equation}

\begin{remark*}
For ``general kinetics'', it is often assumed that $r_{i \to j}(x) > 0$ if and only if $\supp(y(i))\subseteq\supp(x)$.
Then, $x \in \R^n_>$ implies $r(x)\in\R^\RR_>$.
For mass-action kinetics,
the complexes determine not only the reaction vector $y(j)-y(i)$,
but also the reaction rate
\[
r_{i \to j}(x) = k_{i \to j} \, x^{y(i)} \quad \text{for } (i \to j) \in \RR .
\]
\end{remark*}

In the following, we consider {\em reversible} reaction networks,
where the underlying graph $G$ is symmetric,
that is, $(i \to j) \in \RR$ if and only if $(j \to i) \in \RR$.
For simplicity, we often write $ij$ for $i \to j \in \RR$.

\subsection{Detailed and complex balance}

An equilibrium $x \in \R^n_\ge$ of the dynamical system~\eqref{dynsys} is called {\em detailed-balanced (DB)}
if, for every $ij \in \RR$,
\[
r_{ij}(x) = r_{ji}(x) .
\]
That is, for every (reversible) reaction, the forward and backward rates are equal.

An equilibrium $x \in \R^n_\ge$ of the dynamical system~\eqref{dynsys} is called {\em complex-balanced (CB)}
if, for every $i \in V$,
\[
\sum_{ij \in \RR} r_{ij}(x) = \sum_{ji \in \RR} r_{ji}(x) .
\]
That is, for every complex, the sums of incoming and outgoing rates are equal.

Obviously, we have the implication
\begin{equation}
x \text{ is DB} 
\quad\implies\quad
x \text{ is CB}.
\end{equation}

\subsection{Formal balance and other variants of cycle balance}

A {\em directed} cycle $C \subseteq \RR$ is a 
sequence of edges which connect \blue{a cyclic sequence of} distinct vertices
(except that the first and last vertex are identical)
and which have the same direction \blue{(along the cycle)}. 
Reversible reactions are directed two-cycles (connecting two vertices),
and all cycle conditions below hold trivially for directed two-cycles.

A state $x \in \R^n_\ge$ (not necessarily an equilibrium) of the dynamical system~\eqref{dynsys} is called {\em formally balanced (FB)}
if, for every directed cycle $C \subseteq \RR$,
\begin{equation*} \label{eq:fb}
\prod_{ij \in C} r_{ij}(x) = \prod_{ij \in C} r_{ji}(x) ,
\end{equation*}
cf.\ \cite{DickensteinPerezMillan2011}.
Alternatively, such a state could be called algebraically cycle-balanced;
see also the discussion in the setting of Markov chains \cite{CappellettiJoshi2018}.

\begin{remark*} 
Under quite weak assumptions on the kinetics, 
formal balance is independent of the state:
With every vertex $i \in V$ associate a function $f_i(x)$, with every edge $ij \in \RR$ a function $k_{ij} \, g_{ij}(x)$,
and assume that the reaction rates can be written as $r_{ij}(x) = k_{ij} \, g_{ij}(x) \, f_i(x)$.
Now, let $r(x)\in\R^\RR_>$.
If $g_{ij}(x) = g_{ji}(x)$ for every $ij \in \RR$ or, even more generally, if
$
\prod_{ij \in C} g_{ij}(x) = \prod_{ij \in C} g_{ji}(x)
$
for every directed cycle $C \subseteq \RR$,
then formal balance amounts to 
\[
\prod_{ij \in C} k_{ij} = \prod_{ij \in C} k_{ji} 
\]
for every directed cycle $C \subseteq \RR$.
For mass action,
$f_i(x) = x^{y(i)}$ and $g_{ij}(x) = 1$.
For ``generalized mass action'' in the sense of reversible enzyme kinetics~\cite{SchusterSchuster1989},
$f_i(x) = x^{y(i)}$, $g_{ij}(x) = g_{ji}(x)$,
and hence $r_{ij}(x) - r_{ji}(x) = g_{ij}(x)(k_{ij} \, x^{y(i)}-k_{ji} \, x^{y(j)})$.
In both cases, formal balance
only depends on the rate constants (for $x\in\R^n_>$).
\end{remark*}

Formal balance is defined by equations for directed cycles.
We introduce two other variants of cycle balance which are defined by inequalities and which are weaker than formal balance.

A state $x \in \R^n_\ge$ of the dynamical system~\eqref{dynsys} is called {\em strongly cycle-balanced (sCycB)}
if, for every directed cycle $C \subseteq \RR$,
either $r_{ij}(x) = r_{ji}(x)$ for all $ij \in C$
or there exist $ij \in C$ and $i'j' \in C$ with
\[
r_{ij}(x) < r_{ji}(x) \quad \text{and} \quad r_{i'j'}(x) > r_{j'i'}(x) .
\]

A state $x \in \R^n_\ge$ of the dynamical system~\eqref{dynsys} is called {\em cycle-balanced (CycB)}
if, for every directed cycle $C \subseteq \RR$,
there exist (not necessarily distinct) $ij \in C$ and $i'j' \in C$ with
\[
r_{ij}(x) \le r_{ji}(x) \quad \text{and} \quad r_{i'j'}(x) \ge r_{j'i'}(x) .
\]

For arbitrary kinetics, we have the implications
\begin{equation} \label{eq:impl1}
\begin{array}{ccccc}
x \text{ is DB} & \implies & x \text{ is FB} \\
& \rotatebox[origin=c]{-45}{$\implies$} & \phantom{xx} \rotatebox[origin=c]{-90}{$\implies$} \scriptstyle \, (\ast)
& \rotatebox[origin=c]{-45}{$\implies$} \\
&& x \text{ is sCycB} & \implies & x \text{ is CycB}.
\end{array}
\end{equation}

Thereby, implication $(\ast)$ holds for $r(x)\in\R^\RR_>$.
All other implications hold for $r(x)\in\R^\RR_\ge$ (possibly involving zero reaction rates),
that is, for all $x \in \R^n_\ge$.

The implication ``$x$ is FB $\Rightarrow$ $x$ is CycB'' is obvious if $r(x)\in\R^\RR_>$.
Otherwise, consider a directed cycle $C \subseteq \RR$
and $r_{ij}(x)=0$ for some $ij \in C$.
Now, ``$x$ is FB'' implies $r_{j'i'}(x)=0$ for some $i'j' \in C$,
and hence $0=r_{ij}(x) \le r_{ji}(x)$ and $r_{i'j'}(x) \ge r_{j'i'}(x)=0$, that is, ``$x$ is CycB''.
All other implications are obvious.

\begin{remark*}
For ``general kinetics'', where $r_{ij}(x) > 0$ if and only if $\supp(y(i))\subseteq\supp(x)$,
in particular, for mass-action kinetics,
implication $(\ast)$ in \eqref{eq:impl1} holds for $x \in \R^n_\ge$.

\blue{
To see this, first note that the sign of $r_{ij}(x)$ is determined by $\supp(y(i))$
and hence by vertex $i$ only.
If $\supp(y(i))\subseteq\supp(x)$, we write $r_{i*}(x)>0$ (meaning that $r_{ij}(x)>0$ for all $j$ with $ij \in \RR$); 
otherwise, we write $r_{i*}(x)=0$.

Obviously, implication $(\ast)$ in \eqref{eq:impl1} holds for $x \in \R^n_>$.
It remains to consider a directed cycle $C \subseteq \RR$
with $r_{ij}(x)=0$ for some $ij \in C$.
If $r_{i'j'}(x)=0$ for all $i'j' \in C$ (and hence $r_{i'*}(x)=0$ for all vertices $i'$ in $C$), 
then also $r_{j'i'}(x)=0$ for all $i'j' \in C$,
and both ``$x$ is FB'' and ``$x$ is sCycB''.
Otherwise, $r_{i'j'}(x)>0$ for some $i'j' \in C$.
In particular, 
there is a path $j_1 \to i_1 \to \ldots \to i_\ell \to j_\ell \subseteq C$
involving the complexes $i_l$ with $r_{i_l *} (x)=0$ for $l=1, \ldots, \ell$
and the (not necessarily distinct) complexes $j_1$ and $j_\ell$ with $r_{j_1 *}(x)>0$ and $r_{j_\ell *}(x)>0$.
Hence, $0=r_{i_1 j_1}(x)<r_{j_1 i_1}(x)$ and $0 = r_{i_\ell j_\ell}(x) < r_{j_\ell i_\ell}(x)$,
and both ``$x$ is FB'' and ``$x$ is sCycB''.
}
\end{remark*}

\blue{As stated above, the two new variants of cycle balance are weaker than formal balance, in general.
They allow elementary graph-theoretic proofs of a previous result
and of a new result which holds for arbitrary kinetics and boundary equilibria; see Theorem~\ref{thm:rrn2} below.

Algorithmically, all variants of cycle balance (including formal balance) are equally costly:
the most expensive step is the identification of all cycles in the underlying graph.
For mass action (or ``generalized mass action'' in the sense of reversible enzyme kinetics~\cite{SchusterSchuster1989}) and positive states,
formal balance only depends on the rate constants. 
In this case, also (strong) cycle balance does not depend on the state,
which may allow to determine the directions of the net reactions;
see Example~\ref{exaRN} below.}

\subsection{The induced graph}

Given a reversible reaction network, defined by a finite, simple directed graph $G=(V,\RR)$,
and a state $x \in \R^n_\ge$, 
the {\em induced graph} $G_x=(V,U,D)$ is a finite, simple mixed graph
\blue{(with vertices~$V$, undirected edges~$U$, and directed edges~$D$)}
defined as
\begin{align*}
(i \lrl j) \in U & \quad \text{if } (i \to j) \in \RR \text{ and } r_{ij}(x) = r_{ji}(x) ,\\
(i \to j) \in D & \quad \text{if } (i \to j) \in \RR \text{ and } r_{ij}(x) > r_{ji}(x) .
\end{align*} 
The induced graph contains at most one edge between any two vertices,
and hence cycles in $G_x$ connect three or more vertices.


Let $x \in \R^n_\ge$ be a state of the dynamical system~\eqref{dynsys}
and $G_x$ be the induced graph.
From the definitions in Section \ref{sec:mix},
we have the implications
\begin{equation} \label{eq:impl2}
\begin{array}{lrl}
x \text{ is DB} & \iff & G_x \text{ is edge-balanced,} \\[1ex]
x \text{ is CB} & \implies & G_x \text{ is vertex-balanced,} \\[1ex]
x \text{ is sCycB} & \iff & G_x \text{ does not contain a weakly directed cycle,} \\[1ex]
x \text{ is CycB} & \iff & G_x \text{ does not contain a directed cycle.}
\end{array}
\end{equation}
\blue{Note that the second implication is not an equivalence; see~Remark~\ref{rem:equivalence} below.}

\subsection{Main results}

As stated in the introduction, it was shown in \cite{DickensteinPerezMillan2011} that detailed balance is equivalent to complex balance plus formal balance. We prove that detailed balance is equivalent to complex balance plus cycle balance. 

\begin{pro} \label{pro:rrn1}
Let $x \in \R^n_\ge$ be an equilibrium of the dynamical system~\eqref{dynsys}.
If $x$ is CB and CycB, then it is DB. 
\end{pro}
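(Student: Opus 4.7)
The plan is to translate the hypotheses into properties of the induced graph $G_x$ and then invoke Theorem~\ref{thm:main}. Specifically, the implications collected in~\eqref{eq:impl2} already do almost all of the work: being CB forces $G_x$ to be vertex-balanced, and being CycB is equivalent to $G_x$ having no directed cycle.

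First I would recall that the network is reversible and finite, so $G=(V,\RR)$ is finite, and hence the induced graph $G_x=(V,U,D)$ is a finite, simple mixed graph in the sense of Section~\ref{sec:mix} (simplicity is guaranteed because for each reversible pair $\{ij,ji\}\subseteq\RR$ the induced graph contains at most one edge). Next I would apply the second line of~\eqref{eq:impl2} to the assumption that $x$ is CB to conclude that $G_x$ is vertex-balanced, and the fourth line to the assumption that $x$ is CycB to conclude that $G_x$ contains no directed cycle.

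At this point the hypotheses of Theorem~\ref{thm:main} are satisfied, so $G_x$ is edge-balanced. Finally, by the first line of~\eqref{eq:impl2}, edge-balance of $G_x$ is equivalent to $x$ being DB, which completes the argument.

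There is no real obstacle: the whole point of the framework built in Sections~\ref{sec:mix} and on the induced graph is precisely to reduce this proposition to a one-line invocation of Theorem~\ref{thm:main}. The only thing one has to be careful about is that the second equivalence in~\eqref{eq:impl2} is only a one-way implication (as flagged in Remark~\ref{rem:equivalence}), but the direction needed here (CB $\Rightarrow$ vertex-balanced) is exactly the one that holds.
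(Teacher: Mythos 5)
Your proposal is correct and follows exactly the paper's own proof: translate CB and CycB into vertex-balance and absence of directed cycles in the induced graph $G_x$ via~\eqref{eq:impl2}, apply Theorem~\ref{thm:main} to conclude edge-balance, and read off DB from the first line of~\eqref{eq:impl2}. Your added remark about the one-way direction of the second implication being the direction actually needed is a correct and worthwhile observation, but the argument is the same as in the paper.
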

\begin{proof}
By the implications \eqref{eq:impl2} and Theorem~\ref{thm:main}:
\begin{center}
\begin{tabular}{ccc}
$x$ is CB and CycB & $\implies$ & \parbox[c]{6.2cm}{\centering $G_x$ is vertex-balanced \\ and does not contain a directed cycle} \\[-1.25ex]
& & \rotatebox{-90}{$\implies$} \\
$x$ is DB & $\Longleftarrow$ & $G_x$ is edge-balanced
\end{tabular}
\end{center}
\qed
\end{proof}

The above result is new and stronger than the existing result:
first, it holds for $x \in \R^n_\ge$;
and second, formal balance is stronger than cycle balance, see~\eqref{eq:impl1}.
However, the main advantage from our perspective is its elementary proof, 
which is entirely graph-theoretic and does not involve any algebraic argument; 
in particular, it does not assume mass-action kinetics. 

To summarize, given complex balance, detailed balance is equivalent to all variants of cycle balance.
The result holds for $x \in \R^n_\ge$,
that is, also for boundary equilibria.

\begin{thm} \label{thm:rrn2}
Let $x \in \R^n_\ge$ be a complex-balanced (CB) equilibrium of the dynamical system~\eqref{dynsys}.
The following statements are equivalent:
\begin{itemize}
\item $x$ is detailed-balanced (DB).
\item $x$ is formally balanced (FB). 
\item $x$ is strongly cycle-balanced (sCycB).
\item $x$ is cycle-balanced (CycB).
\end{itemize}
\end{thm}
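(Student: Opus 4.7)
The plan is to observe that the four conditions form a cycle of implications once we combine what has already been shown. In the diagram \eqref{eq:impl1}, the author states that all implications other than $(\ast)$ hold for every $x \in \R^n_\ge$ (no positivity of $r(x)$ required). In particular, without any CB or equilibrium assumption, we already have
\[
\text{DB} \Longrightarrow \text{FB}, \qquad \text{FB} \Longrightarrow \text{CycB}, \qquad \text{DB} \Longrightarrow \text{sCycB}, \qquad \text{sCycB} \Longrightarrow \text{CycB}.
\]
These four implications are immediate from the definitions and were already justified in the discussion following \eqref{eq:impl1}; in particular the potentially delicate ``$\text{FB} \Rightarrow \text{CycB}$'' is handled there separately in the case where some $r_{ij}(x)$ vanishes.

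The missing ingredient to close the cycle is a way back from the (weakest) condition CycB to the (strongest) condition DB, and this is precisely where the CB hypothesis and the graph-theoretic machinery come in: Proposition~\ref{pro:rrn1} shows that for an equilibrium $x$ that is simultaneously CB and CycB, the induced graph $G_x$ is vertex-balanced and contains no directed cycle, whence Theorem~\ref{thm:main} yields edge-balance and therefore DB.

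Putting the pieces together, for a CB equilibrium $x \in \R^n_\ge$ we obtain the cyclic chain
\[
\text{DB} \Longrightarrow \text{FB} \Longrightarrow \text{CycB} \Longrightarrow \text{DB}
\]
(using only implications from \eqref{eq:impl1} plus Proposition~\ref{pro:rrn1}), and similarly
\[
\text{DB} \Longrightarrow \text{sCycB} \Longrightarrow \text{CycB} \Longrightarrow \text{DB},
\]
so all four properties coincide, proving Theorem~\ref{thm:rrn2}.

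I do not expect a genuine obstacle here: the theorem is essentially a bookkeeping statement consolidating the implications \eqref{eq:impl1} with the new graph-theoretic input Proposition~\ref{pro:rrn1}. The one point worth flagging is that we deliberately route the argument through CycB rather than sCycB, because the implication $\text{FB} \Rightarrow \text{sCycB}$ (labelled $(\ast)$) requires positivity $r(x) \in \R^\RR_>$ in general, whereas the weaker $\text{FB} \Rightarrow \text{CycB}$ is available even at boundary states. This choice is exactly what lets the equivalence extend from positive equilibria to arbitrary $x \in \R^n_\ge$, as claimed.
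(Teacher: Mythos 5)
Your proposal is correct and follows exactly the paper's own (one-line) proof: combine the implications in \eqref{eq:impl1} with Proposition~\ref{pro:rrn1} to close the cycle $\text{DB} \Rightarrow \text{FB} \Rightarrow \text{CycB} \Rightarrow \text{DB}$ and $\text{DB} \Rightarrow \text{sCycB} \Rightarrow \text{CycB} \Rightarrow \text{DB}$. Your observation that the route through CycB (rather than through the positivity-requiring implication $(\ast)$) is what makes the result valid for boundary states $x \in \R^n_\ge$ is a correct and worthwhile clarification of the paper's terse argument.
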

\begin{proof}
By the implications \eqref{eq:impl1} and Proposition~\ref{pro:rrn1}.
\qed
\end{proof}

\begin{rem} \label{rem:equivalence}
Only the second implication in \eqref{eq:impl2} is not an equivalence.
In order to obtain an equivalence,
we define $x \in \R^n_\ge$ to be {\em weakly complex-balanced (wCB)}
if $G_x$ is vertex-balanced.
Then, 
``$x$ is wCB $\Leftrightarrow$ $G_x$ is vertex-balanced'',
and Proposition~\ref{pro:rrn1} and Theorem~\ref{thm:rrn2} also hold
if the CB equilibrium is replaced by a wCB equilibrium.
\end{rem}

\begin{exa} \label{exaRN}
Consider the reversible cyclic network $G^\triangleright \colon \sf A \lras B \lras C \lras A$
and assume that the (isolated) network follows the laws of thermodynamics.
Adding the exchange reactions $\sf A \lras 0 \lras C$ (putting $G^\triangleright$ in a flow reactor)
yields the network~$G$,
which contains two independent cycles; see the left diagram.
Both networks have deficiency zero: $\delta^\triangleright = 3-1-2=0$ and $\delta=4-1-3=0$, respectively.
For simplicity, assume mass-action kinetics.
\[
G \colon \quad
\begin{tikzcd}
& \sf A \arrow[ld,xshift=0.4ex] \arrow[dd,xshift=0.425ex] \arrow[rd,xshift=0.2ex,yshift=0.6ex] \\
\sf 0 \arrow[ru,xshift=-0.2ex,yshift=0.6ex] \arrow[rd,xshift=0.4ex,yshift=0.4ex] && \sf B \arrow[lu,xshift=-0.4ex] \arrow[ld,xshift=0.2ex,yshift=-0.2ex]  \\ 
& \sf C \arrow[lu,xshift=-0.2ex,yshift=-0.2ex] \arrow[uu,xshift=-0.425ex] \arrow[ru,xshift=-0.4ex,yshift=0.4ex] 
\end{tikzcd}
\qquad \qquad
G_x \colon \quad
\begin{tikzcd}
& \sf A \arrow[dd] \arrow[dr] \\
\sf 0 \arrow[ur] && \sf B \arrow[dl] \\ 
& \sf C \arrow[lu]
\end{tikzcd}
\]
For the isolated network $G^\triangleright$, 
there exists a complex-balanced equilibrium $x^\triangleright \in \R^3_>$ (implied by $\delta^\triangleright=0$)
which is detailed-balanced (implied by thermodynamics)
and hence formally balanced.
For any $x\in\R^3_>$,
the condition for formal balance is given by
$k_{\sf A\to B} \, k_{\sf B\to C} \, k_{\sf C\to A}=k_{\sf A\to C} \, k_{\sf C\to B} \, k_{\sf B\to A}$.
Hence, any state $x\in\R^3_>$ is formally balanced and, by~\eqref{eq:impl1}, (strongly) cycle-balanced.
That is, any mixed graph $G^\triangleright_x$, induced by $G^\triangleright$ and $x$,
does not contain a (weakly) directed cycle,
and the same holds when $G^\triangleright$ is seen as a subnetwork of $G$; see below.

For the full network $G$, 
there exists a complex-balanced equilibrium $x \in \R^3_>$ (implied by $\delta=0$).
Assume that $x$ is not detailed-balanced,
in particular,
that the mixed graph $G_x$, induced by $G$ and $x$,
does not have $\sf C \lrl 0 \lrl A$ as a subgraph.
By complex balance (for the complex $\sf 0$),
$G_x$ has $\sf C \to 0 \to A$ (or, alternatively, $\sf A \to 0 \to C$) as a subgraph; see the right diagram.
By Theorem~\ref{thm:rrn2}, $x$~is not cycle-balanced,
that is, there exists a directed cycle in $G_x$.
By the argument above,
the subgraph $G^\triangleright_x$ is \textbf{not} a (weakly) directed cycle.
\[
\specialcell{infeasible, \\ (weakly) dir. \\ subgraphs $G^\triangleright_x \colon$}
\quad
\begin{tikzcd}
\sf A \arrow[dr] \\
& \sf B \arrow[dl] \\ 
\sf C \arrow[uu]
\end{tikzcd}
\quad
\begin{tikzcd}
\sf A \arrow[dr] \\
& \sf B \arrow[dl] \\ 
\sf C \arrow[dash,uu]
\end{tikzcd}
\quad
\begin{tikzcd}
\sf A \arrow[dd] \\
& \sf B \arrow[ul] \\ 
\sf C \arrow[ur]
\end{tikzcd}
\quad
\begin{tikzcd}
\sf A \arrow[dd] \\
& \sf B \arrow[dash,ul] \\ 
\sf C \arrow[dash,ur]
\end{tikzcd}
\]
The only feasible subgraph $G^\triangleright_x$ is $\sf C \leftarrow \sf A \to B \to C$;
see again the right diagram above.
The induced graph $G_x$ contains the directed cycles $\sf 0 \to A \to C \to 0$ and $\sf 0 \to A \to B \to C \to 0$
which involve the exchange reactions (in agreement with thermodynamics).

Remarkably, all edges of the induced graph (all directions of the net reactions) can be determined
without computing the complex-balanced equilibrium.
\end{exa}

\section{Balance in Markov chains}

The argument in Section \ref{sec:mix} has been developed for the application to reaction networks (RNs). 
However, owing to the abstractness of the result, it is easily applicable in any setting with an underlying graph structure. 
We illustrate this via Markov chains (MCs), a widely used class of stochastic models with a naturally associated graph. 



A conti\-nuous-time MC is a random process on a countable state space,
where a measure (in particular, a distribution) on the set of states 
is determined by the initial measure and the transition rates 
(via the Kolmogorov forward equations).
For a formal definition, see e.g.\ \cite{norris:markov}.
In a natural way, states can be viewed as vertices of a directed graph whose edges represent transitions with positive rates. 



We denote the set of states (vertices) by $V$ and the transition rate from state $x \in V$ to state $y \in V$ by $q(x,y)$.
Further, we introduce the set of transitions (edges) $\T$, that is, $(x , y) \in \T$ if $q(x,y) >0$. 
In the following, we require that $q(x,y) >0$ implies $q(y,x)>0$ for all $x,y \in V$. 
That is, we consider MCs where the associated simple, directed graph $G=(V,\T)$ is symmetric.
Such MCs are analogous to reversible RNs, however, we do not refer to them as ``reversible'' 
since this term is reserved for another notion; see below.




A measure $\mu$ on the countable set $V$
assigns a nonnegative real or infinity to each subset of $V$.
Here, we consider only $\sigma$-finite measures where $\mu(\{x\}) < +\infty$ for all $x \in V$.
Following standard convention, we drop the curly brackets and write $\mu(x)$ for $\mu(\{x\})$.
If $\sum_{x \in V} \mu(x) = 1$, then $\mu$ is a distribution.
A measure $\mu$ is {\em stationary} if, for all $x \in V$, 
\[
\sum_{(x,y) \in \T} \mu(x) q(x,y) = \sum_{(y,x) \in \T} \mu(y) q(y,x) .
\]

A stationary measure of a MC 
is analogous to a complex-balanced equilibrium of an RN 
in the sense that, for every state,
the sums of incoming and outgoing ``probability flows'' are equal.
Finally, a measure $\mu$ is {\em reversible} (detailed-balanced) if, for all $(x,y) \in \T$,
\[
\mu(x) q(x,y) =  \mu(y) q(y,x) .
\]
Clearly, the notions of detailed balance in RNs and MCs are analogous.


Given a MC with associated symmetric, simple, directed graph $G=(V, \T)$
and a measure $\mu$,
the {\em induced graph} $G_\mu=(V,U,D)$ is a simple, mixed graph
defined as
\begin{align*}
(x \lrl y) \in U & \quad \text{if } (x,y) \in \T \text{ and } \mu(x) q(x,y) = \mu(y) q(y,x) ,\\
(x \to y) \in D & \quad \text{if } (x,y) \in \T \text{ and } \mu(x) q(x,y) > \mu(y) q(y,x) .
\end{align*} 


\blue{
Now, let $\mu$ be a measure of a MC
and $G_\mu$ be the induced graph.
From the definitions in Section \ref{sec:mix},
we have the implications
\begin{equation*}
\begin{array}{lrl}
\mu \text{ is reversible} & \iff & G_\mu \text{ is edge-balanced,} \\[1ex]
\mu \text{ is stationary} & \implies & G_\mu \text{ is vertex-balanced.} 
\end{array}
\end{equation*}
}

An application of Theorem \ref{thm:main_inf} immediately yields the following result. 

\begin{thm} 
Let $G$ be the graph associated with a continuous-time Markov chain, 
where $q(x,y) >0$ if and only if $q(y,x)>0$. 
Let $\mu$ be a stationary measure.
If \blue{the induced graph} $G_\mu$ does not contain a directed cycle or a bi-infinite directed path,
then $\mu$ is a reversible measure. 
\end{thm}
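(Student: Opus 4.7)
The plan is to apply Theorem~\ref{thm:main_inf} to the induced graph $G_\mu$. The two equivalences/implications displayed immediately before the theorem already encode what we need: reversibility of $\mu$ is by definition edge-balance of $G_\mu$, the hypothesis ``no directed cycle or bi-infinite directed path in $G_\mu$'' is precisely the hypothesis of Theorem~\ref{thm:main_inf}, and stationarity of $\mu$ is claimed to imply vertex-balance of $G_\mu$. So the whole proof is essentially a one-line composition, provided we briefly justify the stationarity-to-vertex-balance step for the reader.

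First I would verify that implication. Fix $x \in V$ and suppose $x$ has some incident directed edge in $G_\mu$; say $y' \to x$ belongs to $D$, meaning $\mu(y')q(y',x) > \mu(x)q(x,y')$. If no directed edge out of $x$ existed in $G_\mu$, then for every neighbour $z$ with $(x,z)\in\T$ we would have $\mu(x)q(x,z) \le \mu(z)q(z,x)$, with strict inequality at least at $z=y'$. Summing over all neighbours contradicts the stationarity identity at $x$. The symmetric case (some $x \to y''$ but no incoming directed edge) is identical. Hence every vertex of $G_\mu$ is either incident only to undirected edges or carries a pair of oppositely directed edges, which is exactly vertex-balance in the sense of Section~\ref{sec:mix}.

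With vertex-balance in hand, Theorem~\ref{thm:main_inf} applies directly to the simple mixed graph $G_\mu$ and forces it to be edge-balanced. Edge-balance means every edge is undirected, which in the translation of the induced graph amounts to $\mu(x)q(x,y) = \mu(y)q(y,x)$ for every $(x,y)\in\T$, i.e.\ $\mu$ is reversible.

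The only point requiring a small amount of care, and the closest thing to an obstacle, is the summation step in the verification of vertex-balance: one needs the series defining stationarity to be well-defined so that term-by-term inequalities can be summed to a strict inequality. This is ensured by the $\sigma$-finiteness of $\mu$ with $\mu(x)<\infty$ and by stationarity holding as an equality of the (possibly infinite) sums at every $x$; strict inequality in even one term, together with $\le$ in all others, rules out the equality. Everything else is a direct quotation of the earlier graph-theoretic result.
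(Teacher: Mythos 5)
Your proposal is correct and follows essentially the same route as the paper: the paper likewise derives the theorem as an immediate application of Theorem~\ref{thm:main_inf}, using the displayed implications that reversibility is edge-balance of $G_\mu$ and stationarity implies vertex-balance of $G_\mu$. The only difference is that you spell out the stationarity-to-vertex-balance step (including the summability caveat), which the paper asserts without proof; your verification is sound given that the total exit rate from each state, and hence by stationarity the incoming flow, is finite.
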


Its contrapositive is useful to state.
If a stationary measure is not reversible, then the induced graph contains a directed cycle or a bi-infinite directed path. 
See Examples \ref{exaMC1} and \ref{exaMC2} below.


\begin{exa} \label{exaMC1}
Consider again the reversible cyclic network $\sf A \lras B \lras C \lras A$,
but this time with \textbf{stochastic} mass-action kinetics.
The corresponding rate constants are specified as edge labels in the graph below.
\[
\begin{tikzcd}
& \sf A \arrow[ddl,"1",xshift=0.3ex,yshift=-0.3ex] \arrow[ddr,"2",xshift=0.3ex,yshift=0.3ex] \\ \\
\sf C \arrow[rr,"1",yshift=0.425ex] \arrow[uur,"2",xshift=-0.3ex,yshift=0.3ex] && \sf B \arrow[uul,"1",xshift=-0.3ex,yshift=-0.3ex] \arrow[ll,"2",yshift=-0.425ex]
\end{tikzcd}
\]
The (infinite) graph $G=(V,\T)$ associated with the Markov chain is given by $V= \Z^3_\ge$,
$q\left((a,b,c)\to(a-1,b+1,c)\right)=2a$, $q\left((a,b,c)\to(a+1,b-1,c)\right)=b$, 
$q\left((a,b,c)\to(a-1,b,c+1)\right)=a$, etc.

For the deterministic system,
$x = (1,1,1)$ is a complex-balanced, but not detailed-balanced equilibrium. 
For the stochastic system,
the stationary (necessarily ``complex-balanced'') distribution $\pi \colon \Z^3_\ge \to \R$ is given by the product form 
\[
\pi(a,b,c) =  \frac{{\rm e}^{-3}}{a! \, b! \, c!} \, ,
\]
cf.~\cite{anderson2010product}.
Since this stationary distribution is not reversible (detailed-balanced), 
the induced graph $G_\pi$ must have a directed cycle or a bi-infinite directed path. 
Indeed, the (infinite) induced graph 
can be decomposed into directed cycles (connecting three vertices),
as shown in the graph below. 
The corresponding net probability flows between states
are specified as edge labels.
\begin{equation*}
  \begin{tikzpicture}[baseline={(current bounding box.center)}]
   \node[state] (C)  at (0,0)  {$(a,b,c+1)$};
   \node[state] (A)  at (2,3)  {$(a+1,b,c)$};
   \node[state] (B)  at (4,0)  {$(a,b+1,c)$};
   \path[->]
        	 (A) edge node {$\frac{e^{-3} }{a!\,b!\,c!}$} (B)
	 (B) edge node {$\frac{e^{-3} }{a!\,b!\,c!}$} (C)
	 (C) edge node {$\frac{e^{-3} }{a!\,b!\,c!}$} (A)
;
\end{tikzpicture}
\end{equation*}
\end{exa}


\begin{exa}  \label{exaMC2}
Let $q \in (0,1)$.
Consider a Markov chain given by $V = \Z$, 
$q(x,x+1) = 2q^{-\abs{x}}$ and $q(x,x-1) = q^{-\abs{x}}$ for $x \in \Z$, and $q(x,x')=0$ otherwise.
Obviously, there are no directed cycles in the associated graph~$G$, except for the trivial two-cycles.
A~stationary distribution on $\Z$ is
\[
\pi(x) = \pi(0) q^{\abs{x}}
\]
with normalization constant $\pi(0)>0$. 
However, this distribution is not reversible (detailed-balanced), since
$\pi(x)q(x,x+1) \ne \pi(x+1)q(x+1,x)$ for any $x \in \Z$. 
Hence, the induced graph $G_\pi$ has directed edges $x \to x+1$ for $x \in \Z$.
The induced graph is vertex-balanced, but not edge-balanced,
in particular, $G_\pi$ contains a bi-infinite directed path.

Since $q(x,y) > 0$ if and only if $q(y,x)>0$ and there are no (nontrivial) cycles, 
there must be a reversible stationary measure on $\Z$ as well. In fact,
\[
\rho(x) = \rho(0) 
\begin{cases} 
(2q)^x & \text{if } x \ge 0 \\
\left(\frac{q}{2}\right)^{-x} & \text{if } x < 0 
\end{cases}
\] 
is such a measure.
For $q<\frac{1}{2}$, it is finite and hence a distribution (for some normalization constant $\rho(0)>0$).
The induced graph $G_\rho$ is both vertex-balanced and edge-balanced.

Since there exist two different stationary distributions $\pi \ne \rho$, the Markov chain is not positive recurrent. 
\end{exa}


Finally, we summarize similarities and dissimilarities in the settings of RNs and MCs 
in a table.
\begin{table}[H]
\centering
\begin{adjustbox}{max width=\textwidth}
\begin{tabular}{|c||c|c|}
\hline

\specialcell{} & \specialcell{chemical \\ \bf reaction network \\ with mass-action \\ kinetics} & \specialcell{continuous-time \\ \bf Markov chain} \\ 
\hline
\hline

\specialcell{variable} & \specialcell{species \\ concentrations $x$} & \specialcell{probability \\ measure $\mu$} \\ 
\hline
\hline

\specialcell{function \\ on vertex $i$} & monomial $x^{y(i)}$ & $\mu(i)$ \\ 
\hline

\specialcell{function \\ on edge $ij$} & rate constant $k_{ij}$ & transition rate $q(i,j)$ \\ 
\hline

\specialcell{product function \\ on edge $ij$} & \specialcell{reaction rate \\ $k_{ij} \, x^{y(i)}$} & \specialcell{probability flow \\ $\mu(i) \, q(i,j)$} \\ 
\hline
\hline

\specialcell{vertex balance} & \specialcell{complex-balanced \\ equilibrium} & \specialcell{stationary \\ measure} \\ 
\hline

\specialcell{edge balance} & \specialcell{detailed-balanced \\ equilibrium} & \specialcell{reversible \\ measure} \\ 
\hline

\specialcell{cycle conditions}  & \specialcell{formal balance, \\ cycle balance} & \specialcell{Kolmogorov \\ cycle conditions} \\ 
\hline
\end{tabular}
\end{adjustbox}
\end{table}

\subsection*{Acknowledgments}

SM was supported by the Austrian Science Fund (FWF), project 
P33218.
The paper benefited from discussions
at the workshop ``Advances in chemical reaction network theory''
at the Erwin Schr\"odinger Institute (ESI) in October 2018
and 
at the workshop (SQuaRE) ``Dynamical properties of deterministic and stochastic models of reaction networks''
at the American Institute of Mathematics (AIM) in March 2019.


\bibliographystyle{abbrv}
\bibliography{crnt,BNbibliography}

\begin{thebibliography}{10}

\bibitem{anderson2011proof}
D.~F. Anderson.
\newblock A proof of the global attractor conjecture in the single linkage
  class case.
\newblock {\em SIAM Journal on Applied Mathematics}, 71(4):1487--1508, 2011.

\bibitem{anderson2010product}
D.~F. Anderson, G.~Craciun, and T.~G. Kurtz.
\newblock Product-form stationary distributions for deficiency zero chemical
  reaction networks.
\newblock {\em Bull. Math. Biol.}, 72(8):1947--1970, 2010.

\bibitem{Berge1962}
C.~Berge.
\newblock {\em The theory of graphs and its applications}.
\newblock Translated by Alison Doig. Methuen \& Co. Ltd., London; John Wiley \&
  Sons Inc., New York, 1962.

\bibitem{Boltzmann1872}
L.~Boltzmann.
\newblock Weitere {S}tudien \"uber das {W}\"armegleichgewicht unter
  {G}asmolek\"ulen.
\newblock {\em Sitzungsberichte der kaiserlichen Akademie der Wissenschaften},
  66:275--370, 1872.

\bibitem{Boltzmann1887}
L.~Boltzmann.
\newblock Neuer {B}eweis zweier {S}\"atze \"uber das {W}\"armegleichgewicht
  unter mehratomigen {G}asmolek\"ulen.
\newblock {\em Sitzungsberichte der kaiserlichen Akademie der Wissenschaften},
  95:153--164, 1887.

\bibitem{CappellettiJoshi2018}
D.~Cappelletti and B.~Joshi.
\newblock Graphically balanced equilibria and stationary measures of reaction
  networks.
\newblock {\em SIAM J. Appl. Dyn. Syst.}, 17(3):2146--2175, 2018.

\bibitem{cappelletti2016product}
D.~Cappelletti and C.~Wiuf.
\newblock Product-form poisson-like distributions and complex balanced reaction
  systems.
\newblock {\em SIAM Journal on Applied Mathematics}, 76(1):411--432, 2016.

\bibitem{craciun2015toric}
G.~Craciun.
\newblock Toric differential inclusions and a proof of the global attractor
  conjecture.
\newblock {\em Preprint, arXiv:1501.02860}, 2015.

\bibitem{CraciunDickensteinShiuSturmfels2009}
G.~Craciun, A.~Dickenstein, A.~Shiu, and B.~Sturmfels.
\newblock Toric dynamical systems.
\newblock {\em J. Symb. Comput.}, 44(11):1551--1565, 2009.

\bibitem{craciun2020efficient}
G.~Craciun, J.~Jin, and P.~Y. Yu.
\newblock An efficient characterization of complex-balanced, detailed-balanced,
  and weakly reversible systems.
\newblock {\em SIAM Journal on Applied Mathematics}, 80(1):183--205, 2020.

\bibitem{craciun2013persistence}
G.~Craciun, F.~Nazarov, and C.~Pantea.
\newblock Persistence and permanence of mass-action and power-law dynamical
  systems.
\newblock {\em SIAM Journal on Applied Mathematics}, 73(1):305--329, 2013.

\bibitem{DickensteinPerezMillan2011}
A.~Dickenstein and M.~P{\'e}rez~Mill{\'a}n.
\newblock How far is complex balancing from detailed balancing?
\newblock {\em Bull. Math. Biol.}, 73(4):811--828, 2011.

\bibitem{Feinberg1972}
M.~Feinberg.
\newblock Complex balancing in general kinetic systems.
\newblock {\em Arch. Ration. Mech. Anal.}, 49:187--194, 1972/73.

\bibitem{Feinberg1989}
M.~Feinberg.
\newblock Necessary and sufficient conditions for detailed balancing in mass
  action systems of arbitrary complexity.
\newblock {\em Chem. Eng. Sci.}, 44(9):1819--1827, 1989.

\bibitem{FeliuCappellettiWiuf2018}
E.~Feliu, D.~Cappelletti, and C.~Wiuf.
\newblock Node balanced steady states: unifying and generalizing complex and
  detailed balanced steady states.
\newblock {\em Math. Biosci.}, 301:68--82, 2018.

\bibitem{Horn1972}
F.~Horn.
\newblock Necessary and sufficient conditions for complex balancing in chemical
  kinetics.
\newblock {\em Arch. Ration. Mech. Anal.}, 49:172--186, 1972.

\bibitem{HornJackson1972}
F.~Horn and R.~Jackson.
\newblock {General mass action kinetics}.
\newblock {\em Arch. Ration. Mech. Anal.}, 47(2):81--116, 1972.

\bibitem{horn1974dynamics}
F.~J. Horn.
\newblock The dynamics of open reaction systems.
\newblock In {\em Mathematical aspects of chemical and biochemical problems and
  quantum chemistry (Proc. SIAM-AMS Sympos. Appl. Math., New York, 1974)},
  volume~8, pages 125--137, 1974.

\bibitem{Johnston2014}
M.~D. Johnston.
\newblock Translated chemical reaction networks.
\newblock {\em Bull. Math. Biol.}, 76(5):1081--1116, 2014.

\bibitem{Joshi2015}
B.~Joshi.
\newblock A detailed balanced reaction network is sufficient but not necessary
  for its {M}arkov chain to be detailed balanced.
\newblock {\em Discrete Contin. Dyn. Syst. Ser. B}, 20(4):1077--1105, 2015.

\bibitem{MuellerHofbauer2015}
S.~M\"uller and J.~Hofbauer.
\newblock Genetic recombination as a chemical reaction network.
\newblock {\em Math. Model. Nat. Phenom.}, 10(5):84--99, 2015.

\bibitem{MuellerRegensburger2014}
S.~M\"uller and G.~Regensburger.
\newblock Generalized mass-action systems and positive solutions of polynomial
  equations with real and symbolic exponents ({I}nvited talk).
\newblock In V.~P. Gerdt, W.~Koepf, W.~M. Seiler, and E.~V. Vorozhtsov,
  editors, {\em Computer Algebra in Scientific Computing}, volume 8660 of {\em
  Lecture Notes in Computer Science}, pages 302--323. Springer International
  Publishing, 2014.

\bibitem{norris:markov}
J.~Norris.
\newblock {\em Markov chains}.
\newblock Cambridge {U}niversity {P}ress, 1998.

\bibitem{SchusterSchuster1989}
S.~Schuster and R.~Schuster.
\newblock A generalization of {W}egscheider's condition. {I}mplications for
  properties of steady states and for quasi-steady-state approximation.
\newblock {\em J. Math. Chem.}, 3(1):25--42, 1989.

\bibitem{vanderSchaft2015}
A.~van~der Schaft, S.~Rao, and B.~Jayawardhana.
\newblock Complex and detailed balancing of chemical reaction networks
  revisited.
\newblock {\em J. Math. Chem.}, 53(6):1445--1458, 2015.

\bibitem{Wegscheider1901}
R.~Wegscheider.
\newblock {\"U}ber simultane {G}leichgewichte und die {B}eziehungen zwischen
  {T}hermodynamik und {R}eactionskinetik homogener {S}ysteme.
\newblock {\em Monatshefte f\"ur Chemie und verwandte Teile anderer
  Wissenschaften}, 22(8):849--906, 1901.

\end{thebibliography}


\end{document}